\newtheorem{theorem}{Theorem}[section]
\newtheorem{remark}[theorem]{Remark}
\newtheorem{lemma}[theorem]{Lemma}
\newcommand{\tu}{\tilde{u}}
\newcommand{\bu}{\bar{u}}
\newcommand{\e}{\mathbb{E}}
\def\bn{\boldsymbol{n}}
\def\leq{\leqslant}
\def\geq{\geqslant}
\def\Lo{\mathcal{L}}
\newcommand{\eps}{\varepsilon}
 \newcommand{\set}[1]{ \left\{#1\right\}}
\newcommand{\rd}{\mathrm{d}}
\newcommand{\dom}{{D}}
\newcommand{\Od}{\mathcal{O}}
\newcommand{\ro}{\mathrm{\bf{ot}}}
\newcommand{\ri}{\mathrm{\bf{in}}}
\newcommand{\re}{\mathrm{e}}
\renewcommand{\vec}[1]{\boldsymbol{#1}}
\DeclareMathAlphabet{\mathsfsl}{OT1}{cmss}{m}{sl}
\renewcommand{\keywords}[1]{\textbf{\textit{Keywords: }}#1}
\begin{document}

\begin{center}
{\Large  Asymptotic Expansion with Boundary Layer Analysis for Strongly Anisotropic Elliptic Equations}

\
\

Ling Lin \footnote{email: linglin@cityu.edu.hk. Corresponding author.} and
Xiang Zhou \footnote{email: xiang.zhou@cityu.edu.hk. The research of XZ  was supported by the grants from the Research Grants Council of the Hong Kong Special Administrative Region, China (Project No. CityU 11304314,    11304715
and 11337216).  }
\par
\
\par

Department of Mathematics
\par
City University of Hong Kong\par
Tat Chee Ave, Kowloon \par Hong Kong SAR

\end{center}

\date{\today}
\section*{abstract}
In this article, we derive the asymptotic expansion, up to an arbitrary order in theory, for  the solution of a two-dimensional elliptic equation
with strongly anisotropic    diffusion coefficients along different directions, subject to the Neumann boundary condition
and the  Dirichlet boundary condition on specific parts of the domain boundary, respectively.
The ill-posedness arising from the Neumann boundary condition in the strongly anisotropic diffusion limit
is handled by the decomposition of the solution into a  mean part and a fluctuation part.
The boundary layer analysis due to the Dirichlet boundary condition is conducted for each order in the expansion for the fluctuation part. Our results suggest that the leading order
is the combination of the mean part and the composite approximation of the fluctuation part for the general
Dirichlet boundary condition.

\subjclass[]{}
\keywords{strongly anisotropic elliptic equation, boundary layer analysis, matched asymptotic analysis.}


\section{Introduction}

The strongly anisotropic elliptic problem we consider in this article is the following
equation  imposed in the  domain $\dom=(0,1)\times(0,1)$ with mixed Dirichlet--Neumann boundary conditions:
\begin{equation}\label{eqn:PDE}
\begin{cases}
 -\eps^{-2} \partial_x^2 u_\eps(x,y)- \partial_y^2 u_\eps(x,y) =f(x,y) & \quad \text{in } \dom ,\\
 \partial_x u_\eps(0,y)=\partial_x u_\eps(1,y)=0,  \quad &0\leq y\leq 1,\\
  u_\eps(x,0)=\phi_0(x),   \quad  u_\eps(x,1)=\phi_1(x),  &0\leq x\leq 1.
\end{cases}
\end{equation}
The special feature for this equation is that   $\eps$ is a small positive number,
 that is, the diffusion coefficient along the $x$-direction is very large.
Here the Neumann boundary conditions are imposed on the left and right boundaries
and the Dirichlet boundary conditions are imposed on the top and bottom boundaries
of the rectangular domain.

The equation \eqref{eqn:PDE}  belongs to  a large class
of diffusion models with the strongly anisotropic diffusion coefficients
 from many applications, e.g.,  image
processing \cite{wang2008},  flows in porous
media \cite{Ashby1999,Hou1997},  semiconductor modeling \cite{Manku1993},
heat conduction in fusion plasmas \cite{VanEs2014JCP}, and so on.
Note that here we use $\eps^2$ rather than $\eps$ in the diffusion coefficients
for the reason we will mention later.
Furthermore, in this simplified model \eqref{eqn:PDE}, the line field parallels to the $x$-axis.
In more realistic models, the line field may not be so simple and could be a closed loop.
Our main interest   is to
examine the asymptotic behaviors of the solution $u_\eps$ such as
$u_\eps \sim u_0+\eps u_1 + \eps^2 u_2+\cdots$.  To illustrate the main ideas,
the equation \eqref{eqn:PDE} serves a good model which can simply  lots of
technical calculations.

We first briefly review the existing works on the limit  of the solution as $\eps \downarrow 0$.
Mathematically,  the Neumann boundary conditions yield an ill-posed limiting problem
as taking the formal limit   $\eps \downarrow 0$ in the original problem \eqref{eqn:PDE}
 \cite{Degond2012CMS}.  The traditional numerical methods for the elliptic equations, such as
 the standard five-point scheme, suffer from the large condition numbers for tiny values of $\eps$.
There have been a lot of efforts focusing on the numerical methods for the strongly anisotropic elliptic problems.
In particular,
the class of asymptotic preserving method was developed recently by P. Degond et al.
 in a series of papers, e.g.,  \cite{Degond2010MMS,Degond2010JCP,Degond2012JCP,Degond2012CMS}.
Their  main idea is to decompose the solution into two parts, a  {\it mean}  part
along the  strongly diffusive direction and a fluctuation part, and then they reformulate the original equation into a
coupled system of the equations for these two parts.
More recently, \cite{Tang2016} proposed a new innovative  approach   to  replace one of the Neumann boundary condition
by the integration of the original equation along the field line.
 Consequently, the singular terms can be replaced by some regular terms, which  yields a
well-posed limiting problem.

   Heuristically,  the vanishing $\eps$ means  that
the variation of the solution  $u_\eps(x,y) $ along the $x$-direction is very slow, which then    suggests that
the limiting solution $  u_0=\lim_{\eps\downarrow 0}u_\eps$
(defined in certain sense) is constant along the $x$-direction, i.e., a function of the $y$ variable only.
 However, this would be inconsistent with the Dirichlet boundary conditions
in \eqref{eqn:PDE} if either of $\phi_0$ and $\phi_1$ were nonconstant.
In other words, a function of only the $y$ variable (such as the so-called mean part)  is, in general, not capable of describing
the limiting solution in the whole domain because there exist boundary layers near each nonconstant Dirichlet boundary.
Inside these boundary layers,
the function of only the $y$ variable has to be corrected to match the nonconstant Dirichlet boundary conditions.
It is noteworthy that  for  existing  numerical examples  presented in  previous  works such as \cite{Degond2012JCP, Tang2016}, as far as the authors know,  the Dirichlet boundary conditions are always homogeneous,
and so there are no boundary layers. Actually, those numerical examples are intentionally constructed
by choosing a true solution without boundary layers first and then defining the force term  $f$ accordingly.
However, as we argued in the above,
the emergence of the boundary layer is generic.
This   phenomena certainly makes our asymptotic analysis more complicated.

The existence of the boundary layers for the strongly anisotropic elliptic problem \eqref{eqn:PDE}
can also be easily seen from its probabilistic  interpretation which
 is connected to a special type of the  stochastic  fast-slow dynamics.
 The random walk model corresponding to the elliptic problem \eqref{eqn:PDE} is very simple.
Let $(X_t, Y_t)$ be the position of a particle in $\dom$ satisfying the following
stochastic differential equation
\begin{equation}\label{SDE}
\begin{cases}
&\rd X_t = \eps^{-1}\, \rd W_t, \\
&\rd Y_t =   \rd B_t, \\
  \end{cases}
\end{equation}
subject to the reflection boundary condition on the left and right boundaries ($x=0,1$) and
  the absorbing boundary condition on the top and bottom boundaries ($y=0,1$).
Here $W_t$ and $B_t$ are two independent (standard) Brownian motions.
By the Feymann-Kac formula \cite{Oksendal}, the solution to \eqref{eqn:PDE} is represented by
\begin{equation}\label{eqn:ueprobrep}
u_\eps(x,y)
=\e\biggl[\phi_{Y_\tau}(X_{\tau})+\frac12\int_0^{\tau} f(X_{t},Y_{t})\,\mathrm{d} t ~\bigg|~ (X_0, Y_0)=(x, y)\biggr],
\end{equation}
where $\tau=\inf\set{t>0: Y_t=0 \text{ or }1}$ is the absorption time of the $Y$ process to the Dirichlet boundaries.
The solution to \eqref{SDE} is straightforward:   $X_t = X_0 +  \eps^{-1} W_{t}\overset{\rd}{=}
X_0 +    W_{t/\eps^2}$ and $Y_t =Y_0 + B_t$. (``$\overset{\rd}{=}
$" means the equality   in the sense of distribution.)
So $X_t$ is a fast process and $Y_t$ is a slow process.
The particle randomly moves  drastically fast with the speed at the order $\Od(\eps^{-1})$  along the $x$-direction while
at a normal speed at the order $\Od(1)$ in the $y$-direction.
By the averaging principle,
 the leading
order dynamics as the limit  of $\eps\downarrow0$ is the expectation of the slow dynamics for  $Y_t$  with respect to the invariant measure
of the fast variable $X_t$, which is a uniform distribution here. Thus  the expectation of the integral part in \eqref{eqn:ueprobrep}
should have a limit independent of the $x$ variable, which is exactly the so called {\it mean} part in \cite{Degond2010MMS}.
But for the expectation of the first term  in \eqref{eqn:ueprobrep},
it depends on the distribution of the absorbing point $(X_\tau, Y_\tau)$.
If the starting position $(x, y)$ is away from the absorbing boundary,
then the absorbing time $\tau$ is sufficient large compared to the $\Od(\eps)$ relaxation time  to the equilibrium  in the $x$-direction,
so that the averaging principle still holds, and thus the limit of \eqref{eqn:ueprobrep} is a function of the variable $y$ only.
 However, the averaging principle   breaks down
if the initial position $(x, y)$ is too close to the absorbing boundary
so that $\tau$ would be too short to allow the fast dynamics to relax to the equilibrium.
It is easy to see that this occurs if the distance to the boundary  is $\Od(\eps)$,
thus the thickness of the boundary layers
around $y=0,1$ is $\Od(\eps)$.

Our main motivation is to give a more detailed understanding of the above probabilistic picture
by the tool of asymptotic analysis.  The goal is  to
 derive  a series of  approximate functions  to  the solution $u_\eps$  up to an arbitrary order as $\eps\downarrow0$.
In this note, we shall consider the general Dirichlet boundary conditions in \eqref{eqn:PDE}.
This means that  we should include  the boundary layer analysis  in our asymptotic expansion.
We shall show that  each asymptotic term   $u_0, u_1,u_2,\ldots$   exhibits  the boundary layer effect.
In particular, the leading order $u_0(x,y)$  is not simply the mean part  $\bu(y)=\int_0^1 u_\eps(x,y)\,\rd x$.
 To  attack the ill-posedness arising from the Neumann boundary conditions,
 we utilize the strategy of decomposing the solution into  a mean part
 and a fluctuation part \cite{Degond2010MMS,Degond2010JCP,Degond2012JCP,Degond2012CMS}; to deal with
 the boundary layers originating from the nonconstant Dirichlet boundary conditions,
 we adopt  the Van Dyke's method of matched asymptotic
expansions \cite{VanDyke:1975}. Thus the outer expansion and the inner expansion are both conducted.
The series in the outer expansion are described by the $y$-parametrized one-dimensional Neumann boundary value problem
in the $x$ variable,
while the series in the inner expansion are in the form of the two-dimensional  elliptic  equations
which are solved with  the aid of the Fourier series.


The rest of the paper is organized as follows.
Section \ref{sec:exp} presents  our main result of the
asymptotic expansion.
Section \ref{sec:proof} gives a rigorous proof
of our formal expansion.
Section \ref{sec:num} shows the numerical results
to validate the convergence  order in $\eps$
and demonstrate the computational efficiency.
The last section contains our concluding discussion.
\bigskip

\section{Asymptotic Result}
\label{sec:exp}

%

\subsection{Decomposing the solution into the mean value and the fluctuation}
For the solution $u_\eps$ to \eqref{eqn:PDE},
we introduce the mean part $\bu$ along the line field, i.e., the $x$-coordinate
\[
\bu(y):=\int_0^1 u_\eps(x,y)\,\rd x,
\]
and denote the residual as the fluctuation part $\tu_\eps$,
\[
\tu_\eps:={u_\eps-\bu}.
\]
Then by integrating both sides of the equation \eqref{eqn:PDE} with respect to $x$ over $[0,~1]$,
we obtain
\begin{equation}\label{eqn:buPDE}
\begin{cases}
& -\bu''(y)=\bar{f}(y), \quad \text{in } (0, ~1), \\
& \bu(0)=\bar{\phi}_{0}, \quad \bu(1)=\bar{\phi}_{1},
\end{cases}
\end{equation}
where
\[
\bar{f}(y)=\int_0^1 f(x,y)\,\rd x, \quad \bar{\phi}_{0}=\int_0^1 \phi_0(x)\,\rd x, \quad \bar{\phi}_{1}=\int_0^1 \phi_1(x)\,\rd x.
\]
Clearly, \eqref{eqn:buPDE} is a well-posed linear two-point boundary value problem, and $\bu$ can be solved uniquely.
Formally,
\begin{equation}
\label{UB}
\bu(y)=y\biggl(\int_0^1\int_0^z \bar{f}(t)\,\rd t\rd z-\bar{\phi}_{0}+\bar{\phi}_{1}\biggr)-\int_0^y\int_0^z\bar{f}(t)\,\rd t\rd z+\bar{\phi}_{0}.
\end{equation}

Subtracting \eqref{eqn:buPDE} from \eqref{eqn:PDE} yields the PDE  for the fluctuating part:
\begin{equation}\label{eqn:tuPDE}
\begin{cases}
 -\eps^{-2}\partial_x^2 \tu_\eps -\partial_y^2 \tu_\eps=
\tilde{f}, \quad &\text{in } \dom, \\
 \partial_x \tu_\eps(0,y)=\partial_x \tu_\eps(1,y)=0,  \quad &0\leq y\leq 1,\\
  \tu_\eps(x,0)=\tilde\phi_0(x),\quad \tu_\eps(x,1)=\tilde\phi_1(x),  &0\leq x\leq 1,
\end{cases}
\end{equation}
where
\[
\tilde{f}(x,y)=f(x,y)-\bar{f}(y), \quad \tilde\phi_0(x)=\phi_0(x)-\bar{\phi}_{0}, \quad
\tilde\phi_1(x)=\phi_1(x)-\bar{\phi}_{1}.
\]
Note that by construction, we have
\begin{equation}\label{eqn:intcond}
\int_0^1 \tu_\eps(x,y)\,\rd x=0,  \quad   \text{for } 0\leq y\leq 1,
\end{equation}
\[
\int_0^1 \tilde{f}(x,y)\,\rd x=0, \quad   \text{for } 0\leq y\leq 1,
\]
and
\[
\int_0^1\tilde\phi_{0}(x)\,\rd x=\int_0^1\tilde\phi_{1}(x)\,\rd x=0.
\]

\subsection{Asymptotic expansions of the fluctuation $\tu_\eps$}
Our main task is to seek an asymptotic expansion of the fluctuation $\tu_\eps$.
Formally,  as $\eps\downarrow 0$ in \eqref{eqn:tuPDE}
and \eqref{eqn:intcond}, the formal limit $\tu_0=\lim_{\eps\downarrow 0}\tu_\eps$ would
satisfy
\begin{equation}\label{eqn:tu0PDE}
\begin{cases}
 \partial_x^2 \tu_0=0, \quad &\text{in } \dom, \\
 \partial_x \tu_0(0,y)=\partial_x \tu_0(1,y)=0,  \quad &0\leq y\leq 1,\\
 \displaystyle\int_0^1 \tu_0(x,y)\,\rd x=0,  \quad   & 0\leq y\leq 1,\\
  \tu_0(x,0)=\tilde\phi_0(x),\quad \tu_0(x,1)=\tilde\phi_1(x),  &0\leq x\leq 1.
\end{cases}
\end{equation}
Clearly, this is an ill-posed problem unless  $\tilde\phi_0(x)\equiv0$
and $\tilde\phi_1(x)\equiv0$, since the first three equations in \eqref{eqn:tu0PDE}
yield $\tu_0\equiv0$.
This inconsistency implies that we have a singular perturbation problem
and anticipate the emergence of two boundary layer regions  near the Dirichlet boundaries $y=0$ and $y=1$ respectively.
We   apply the Van Dyke's method of matched asymptotic expansions \cite{VanDyke:1975} to tackle this problem, i.e.,
first separately solve the problem in
the inner regions within the boundary layers and in the outer region away from the boundary layers, and then match
them at the edges of the boundary layers.

\subsubsection{Outer expansion}

Assume the following outer expansion away from the Dirichlet boundaries $y=0$ and $y=1$:
\[
\tu_\eps^\ro(x,y)=\sum_{n=0}^\infty\eps^n\tu^\ro_n(x,y).
\]
Substituting this into the equation in \eqref{eqn:tuPDE} and equating coefficients, we obtain
\begin{equation}\label{eqn:tuo0PDE}
 -\partial_x^2 \tu^\ro_0=0,
\end{equation}
\begin{equation}\label{eqn:tuo1PDE}
 -\partial_x^2 \tu^\ro_1=0,
 \end{equation}
 \begin{equation}\label{eqn:tuo2PDE}
 -\partial_x^2 \tu^\ro_2=\partial_y^2\tu^\ro_0+\tilde{f},
\end{equation}
\begin{equation}\label{eqn:tuonPDE}
 -\partial_x^2 \tu^\ro_n=\partial_y^2\tu^\ro_{n-2},\quad n\geq3.
\end{equation}
{These equations   are   a set of parametric one dimensional  differential equations in the $x$ variable and
$y$ is in the role of parameters.}
The outer expansion solutions must also satisfy the Neumann boundary condition
as in \eqref{eqn:tuPDE} and the integral condition \eqref{eqn:intcond}, which gives for any $n$
\begin{equation}\label{eqn:tuonBC}
\begin{cases}
\partial_x \tu^\ro_n(0,y)=\partial_x \tu^\ro_n(1,y)=0,   \quad & 0\leq y\leq 1,\\
\displaystyle\int_0^1 \tu^\ro_n(x,y)\,\rd x=0, \quad & 0\leq y\leq 1.
\end{cases}
\end{equation}
Then each  $\tu^\ro_n$ is the unique solution to these Neumann problems due to the second condition in \eqref{eqn:tuonBC}.
We can solve $\tu^\ro_n$ recursively from  \eqref{eqn:tuo0PDE}$\sim$\eqref{eqn:tuonPDE} together with  \eqref{eqn:tuonBC}.
In particular, we have

\begin{align}
\tu_0^\ro(x,y) & \equiv0,  \label{OO}
\\
\tu_{n}^\ro(x,y)& \equiv0,\quad \text{for odd } n,
\nonumber\\
\tu_2^\ro(x,y)&=-\tilde{F}_2(x,y)+\tilde{F}_3(1,y). \label{2xy}
\end{align}
Here   $\tilde{F}_n$ is defined recursively as
\begin{equation}\label{Fnxy}
\tilde{F}_n(x,y)=
\begin{cases}
\displaystyle\int_0^x \tilde{F}_{n-1}(z,y)\,\rd z, \quad & n\geq 1,\\
\tilde{f}(x,y), & n=0.
\end{cases}
\end{equation}

\subsubsection{Inner expansion near $y=0$}

Next we explore  the inner solution near $y=0$ in terms of the stretched variable $\xi=y/{\eps}$
by assuming
\[
\tu_\eps^{\ri,0}(x,\xi)=\sum_{n=0}^\infty\eps^n\tu^{\ri,0}_n(x,\xi).
\]
In terms of $\xi$, the equation in \eqref{eqn:tuPDE} becomes
\begin{equation}\label{eqn:tuxiPDE}
-\eps^{-2}\partial_x^2 \tu_\eps(x,\xi) -\eps^{-2}\partial_\xi^2 \tu_\eps(x,\xi)=\tilde{f}(x,\eps\xi).
\end{equation}
Thus the inner expansion $\tu_\eps^{\ri,0}(x,\xi)$ near $y=0$   asymptotically satisfies the   equation \eqref{eqn:tuxiPDE}
with the boundary conditions
\[
\begin{cases}
& \partial_x \tu_\eps^{\ri,0}(x,\xi)=0,   \quad  x=0 \text{ or } 1,\\
& \tu^{\ri,0}_\eps(x,0)=\tilde\phi_0(x),
\end{cases}
\]
and the integral condition
\[
\int_0^1 \tu_\eps^{\ri,0}(x,\xi)\,\rd x=0, \quad \text{for } \xi\geq 0.
 \]
We can write  the  Taylor expansion of the fluctuation part of the external force:
\[
\tilde{f}(x,\eps\xi)=\sum_{n=0}^\infty\frac{\eps^n\xi^n}{n!}\partial_y^n \tilde{f}(x,0),
\]
then equate coefficients of the same powers of $\eps$ to obtain the following
two-dimensional elliptic  equations on the domain $(x,\xi)\in (0,1)\times(0,\infty)$:
\begin{equation}\label{eqn:tui00PDE}
\begin{cases}
& -\partial_x^2 \tu^{\ri,0}_0 -\partial_\xi^2 \tu^{\ri,0}_0=0, \quad 0<x<1, ~~\xi>0, \\
& \partial_x \tu^{\ri,0}_0(x,\xi)=0,   \quad  x=0 \text{ or } 1,\\
& \tu^{\ri,0}_0(x,0)=\tilde\phi_0(x),\\
& \displaystyle \int_0^1 \tu_0^{\ri,0}(x,\xi)\,\rd x=0, \quad \text{for } \xi\geq 0,
\end{cases}
\end{equation}
\begin{equation}\label{eqn:tui01PDE}
\begin{cases}
& -\partial_x^2 \tu^{\ri,0}_1 -\partial_\xi^2 \tu^{\ri,0}_1=0, \quad 0<x<1, ~~\xi>0, \\
& \partial_x \tu^{\ri,0}_1(x,\xi)=0,   \quad  x=0 \text{ or } 1,\\
& \tu^{\ri,0}_1(x,0)=0,\\
& \displaystyle\int_0^1 \tu_1^{\ri,0}(x,\xi)\,\rd x=0, \quad \text{for } \xi\geq 0,
\end{cases}
\end{equation}
and for $n\geq 2$,
\begin{equation}\label{eqn:tui0nPDE}
\begin{cases}
& -\partial_x^2 \tu^{\ri,0}_n -\partial_\xi^2 \tu^{\ri,0}_n=\dfrac{\xi^{n-2}}{(n-2)!}\partial_y^{n-2} \tilde{f}(x,0)
, \quad 0<x<1, ~~\xi>0,\\
& \partial_x \tu^{\ri,0}_n(x,\xi)=0,   \quad  x=0 \text{ or } 1,\\
& \tu^{\ri,0}_n(x,0)=0,\\
& \displaystyle \int_0^1 \tu_n^{\ri,0}(x,\xi)\,\rd x=0, \quad \text{for } \xi\geq 0.
\end{cases}
\end{equation}
These problems \eqref{eqn:tui00PDE},\eqref{eqn:tui01PDE} and \eqref{eqn:tui0nPDE}
do not have the uniqueness of the solutions even  the integral conditions $\int_0^1 \tu_n^{\ri,0}(x,\xi)\,\rd x=0$ are imposed.
The uniqueness comes from the matching to the outer solutions as we will show below.

We next solve $\tu^{\ri,0}_n$ by the method of separation of variables
because of the simple geometry of the domain.
We first work on the lowest order at $n=0$.
We look at the solutions that can be expanded into the form $\tu^{\ri,0}_0=\sum_kA_k(\xi)B_k(x)$.
The equation and boundary conditions in \eqref{eqn:tui00PDE} show that \[
B_k(x)=\cos(k\pi x), \quad k\geqslant 0,
\]
which form a complete orthogonal basis for the space $L^2([0,~1])$.
Thus we can expand  $\tu^{\ri,0}_0(\cdot,\xi)$ and  $\tilde{\phi}_0$ respectively
in terms of these  Fourier
cosine series:
\[
\tu^{\ri,0}_0(x,\xi)=\sum_{k=1}^\infty A_{k}(\xi)\cos(k\pi x),
\]
\[
\tilde{\phi}_0(x)=\sum_{k=1}^\infty \phi_{0,k}\cos(k\pi x),
\]
where the coefficients for $k\geq 1$ are
\[
A_k(\xi)=2\int_0^1 \tu^{\ri,0}_0(x,\xi)\cos(k\pi x)\,\rd x,
\]
\[
\tilde{\phi}_{0,k}=2\int_0^1 \tilde{\phi}_0(x)\cos(k\pi x)\,\rd x.
\]
Note that the terms for $k=0$ in these Fourier cosine series disappear
since
\[
A_0(\xi)=\int_0^1 \tu^{\ri,0}_0(x,\xi)\,\rd x=0,
\]
\[
\tilde{\phi}_{0,0}=\int_0^1 \tilde{\phi}_0(x)\,\rd x=0.
\]
Substituting these Fourier cosine expansions into \eqref{eqn:tui0nPDE}, we deduce that for each $k\geq 1$,
$A_k(\xi)$ satisfies
\[
\begin{cases}
&-A_k''+k^2\pi^2A_k=0,\\
&A_k(0)=\tilde{\phi}_{0,k}.
\end{cases}
\]
Hence we have  that  for $k\geqslant 1$,
\[
A_k(\xi)=(c_k+{\tilde{\phi}_{0,k}})\mathrm{e}^{-k\pi\xi}-c_k\mathrm{e}^{k\pi\xi},
\]
with the constants $c_k$ to be determined later by matching the outer and inner solutions.
\begin{remark}
We comment a bit on the possibility of generalizing the above calculations to a general line field.
One can  work  in the curvilinear coordinate of the field line
and obtain the equations for the outer expansions straightforwardly.
But since in general  the line field may not   match the Dirichlet boundary  like in our model \eqref{eqn:PDE},
then the boundary layer may not be a rectangular band with a uniform width $\eps$, and consequently,
the stretching variable $\xi$ would not be simply equal to $y/\eps$; the  geometric property  of the field line  and the Dirichlet boundary
should be incorporated to derive the equations for the inner expansion near the Dirichlet boundary.
\end{remark}

\subsubsection{Matching}
To determine the constants $c_k$'s in the first-term approximation of the boundary layer solution near $y=0$,
we make use of the essential point that the inner solution $\tu^{\ri,0}_0(x,\xi)$
 and the outer solution $\tu^{\ro}_0(x,y)$ should match on the boundary of the layer near $y=0$,
 that is,
 \[
 \lim_{\xi\rightarrow\infty} \tu^{\ri,0}_0(x,\xi)=\lim_{y\rightarrow0+} \tu^{\ro}_0(x,y)=0.
 \]
This gives
\[
c_k=0, \quad k\geq 1,
\]
and so
\[
A_k={\tilde{\phi}_{0,k}}\mathrm{e}^{-k\pi\xi}, \quad k\geq 1.
\]
Consequently,
\begin{equation} \label{in00}
\tu^{\ri,0}_0(x,\xi)=\sum_{k=1}^\infty \tilde{\phi}_{0,k}\mathrm{e}^{-k\pi\xi}\cos(k\pi x).
\end{equation}

\subsubsection{Inner expansion near $y=1$}
For the other Dirichlet boundary at $y=1$, we proceed in the exactly same way to derive the inner solution.
 Assume  the inner expansion near $y=1$ in terms of the stretched variable $\eta=(1-y)/\eps$,
\[
\tu^{\ri,1}_\eps(x,\eta)=\sum_{n=0}^\infty\eps^n\tu^{\ri,1}_n(x,\eta).
\]
In terms of $\eta$, the equation in \eqref{eqn:tuPDE} becomes
\[
-\eps^{-2}\partial_x^2 \tu_\eps(x,\eta) -\eps^{-2}\partial_\eta^2 \tu_\eps(x,\eta)=\tilde{f}(x,1-\eps\eta).
\]
Thus the inner expansion $\tu_\eps^{\ri,1}(x,\eta)$ near $y=1$ must asymptotically satisfy this equation
and the boundary and integral conditions
\[
\begin{cases}
& \partial_x \tu_\eps^{\ri,1}(x,\eta)=0,   \quad  x=0 \text{ or } 1,\\
& \tu^{\ri,1}_\eps(x,0)=0,\\
& \displaystyle \int_0^1 \tu^{\ri,1}_\eps(x,\eta)\,\rd x=0, \quad \text{for } \eta\geq 0.
\end{cases}
\]
Again, using Taylor expansion and then equating coefficients of like powers, we obtain
\[
\begin{cases}
& -\partial_x^2 \tu^{\ri,1}_0 -\partial_\eta^2 \tu^{\ri,1}_0=0, \quad 0<x<1, ~~\eta>0, \\
& \partial_x \tu^{\ri,1}_0(x,\eta)=0,   \quad  x=0 \text{ or } 1,\\
& \tu^{\ri,1}_0(x,0)=\tilde{\phi}_1,\\
& \displaystyle \int_0^1 \tu^{\ri,1}_0(x,\eta)\,\rd x=0, \quad \text{for } \eta\geq 0,
\end{cases}
\]
\[
\begin{cases}
& -\partial_x^2 \tu^{\ri,1}_1 -\partial_\eta^2 \tu^{\ri,1}_1=0, \quad 0<x<1, ~~\eta>0, \\
& \partial_x \tu^{\ri,1}_1(x,\eta)=0,   \quad  x=0 \text{ or } 1,\\
& \tu^{\ri,1}_1(x,0)=0,\\
& \displaystyle \int_0^1 \tu^{\ri,1}_1(x,\eta)\,\rd x=0, \quad \text{for } \eta\geq 0,
\end{cases}
\]
and for $n\geq 2$,
\[
\begin{cases}
& -\partial_x^2 \tu^{\ri,1}_n -\partial_\eta^2 \tu^{\ri,1}_n=\dfrac{(-1)^n\eta^{n-2}}{(n-2)!}\partial_y^{n-2} \tilde{f}(x,1), \quad 0<x<1, ~~\eta>0, \\
& \partial_x \tu^{\ri,1}_n(x,\eta)=0,   \quad  x=0 \text{ or } 1,\\
& \tu^{\ri,1}_n(x,0)=0,\\
& \displaystyle \int_0^1 \tu^{\ri,1}_n(x,\eta)\,\rd x=0, \quad \text{for } \eta\geq 0.
\end{cases}
\]
By the same token, we solve the above equations by Fourier cosine series and use the matching procedure to determine the constants,
then we obtain the lowest order
\begin{equation}\label{in10}
\tu^{\ri,1}_0(x,\eta)=\sum_{k=1}^\infty \tilde{\phi}_{1,k}\mathrm{e}^{-k\pi\eta}\cos(k\pi x),
\end{equation}
where
\[
\tilde{\phi}_{1,k}=2\int_0^1 \tilde{\phi}_1(x)\cos(k\pi x)\,\rd x.
\]

\subsubsection{Composite Expansion}
Now we can get the leading order term of $u_\eps$ which is valid on the whole domain.
Expressing all the three pieces of expansions in terms of $x$ and $y$, and combining them by adding them together and then
subtracting their common parts,
eventually we obtain the following composite approximation
by noting \eqref{UB}, \eqref{OO}, \eqref{in00} and \eqref{in10},

\begin{align}\label{eqn:0th order approx}
 u_\eps(x,y) =& \bu(y) + \tu_\eps(x,y)\nonumber \\
\sim &\bar{u}(y)+\bigl(\tu^{\ro}_0(x,y)+\tu^{\ri,0}_0(x,y/\eps)+\tu^{\ri,1}_0(x,(1-y)/\eps) \nonumber\\
&-\lim_{\xi\rightarrow\infty}\tu^{\ri,0}_0(x,\xi)-\lim_{\eta\rightarrow\infty}\tu^{\ri,1}_0(x,\eta)\bigr)  \nonumber\\
=&y\biggl(\int_0^1\int_0^z \bar{f}(t)\,\rd t\rd z-\bar{\phi}_{0}+\bar{\phi}_{1}\biggr)-\int_0^y\int_0^z\bar{f}(t)\,\rd t\rd z+\bar{\phi}_{0} \nonumber\\
&+\sum_{k=1}^\infty \Bigl(\tilde{\phi}_{0,k}\mathrm{e}^{-k\pi y/\eps}+\tilde{\phi}_{1,k}\mathrm{e}^{-k\pi (1-y)/\eps}\Bigr)
\cos(k\pi x)\nonumber\\
=:&u^{[0]}(x,y),
 \end{align}
 where $\bu(y)$ is the mean solution   given in \eqref{UB}.

\subsubsection{Higher order approximations}
Higher order approximations can be obtained similarly by the Van Dyke's method of matched asymptotic expansions \cite{VanDyke:1975}.
Let us compute the second order  expansion for demonstration for illustration.
To this end, we need to solve the next two orders in the inner expansion   near the boundaries $y=0,1$,
i.e.,
$\tu^{\ri,0}_n$ and $\tu^{\ri,1}_n$ for $n=1,2$.

Using the method of separation of variables again,
by expanding in the Fourier cosine series,
we obtain
\[
\tu^{\ri,0}_1(x,\xi)=\sum_{k=1}^\infty a_k(\mathrm{e}^{-k\pi\xi}-\mathrm{e}^{k\pi\xi})\cos(k\pi x),
\]
\[
\tu^{\ri,0}_2(x,\xi)=\sum_{k=1}^\infty \biggl[\frac{\tilde{f}_k(0)}{k^2\pi^2}+b_k\mathrm{e}^{k\pi\xi}-\Bigl(b_k+\frac{\tilde{f}_k(0)}{k^2\pi^2}\Bigr)
\mathrm{e}^{-k\pi\xi}\biggr]\cos(k\pi x),
\]
where
\[
\tilde{f}_k(y)=2\int_0^1 \tilde{f}(x,y)\cos(k\pi x)\,\rd x,
\]
$a_k$ and $b_k$ are undetermined constants.
Clearly, the $\mathrm{e}^{k\pi\xi}$ terms should disappear since they asymptotically blow up,
this implies that $a_k=0$ and $b_k=0$.
Hence, by guessing $a_k\equiv 0$ and $b_k\equiv 0$, we have the first three terms of the inner solution $\tu_\eps^{\ri,0}(x,\xi)$:
\[
\begin{split}
\tu_\eps^{\ri,0}(x,\xi)&=\tu_0^{\ri,0}(x,\xi)+\eps\tu_1^{\ri,0}(x,\xi)+\eps^2\tu_2^{\ri,0}(x,\xi)+\Od(\eps^3)\\
&=\sum_{k=1}^\infty \Bigl[\tilde{\phi}_{0,k}\mathrm{e}^{-k\pi\xi}+\eps^2\frac{\tilde{f}_k(0)}{k^2\pi^2}(1-\mathrm{e}^{-k\pi\xi})\Bigr]
\cos(k\pi x)+\Od(\eps^3).
\end{split}
\]
Note that   the outer expansion is
\[
\begin{split}
\tu_\eps^{\ro}(x,y)&=\tu_0^{\ro}(x,y)+\eps\tu_1^{\ro}(x,y)+\eps^2\tu_2^{\ro}(x,y)+\Od(\eps^3)\\
&=\eps^2\bigl(-\tilde{F}_2(x,y)+\tilde{F}_3(1,y)\bigr)+\Od(\eps^3).
\end{split}
\]
To see that these two expansions do match up to the given order,
 we write the outer expansion in terms of the inner variables
and    vice versa. By  dropping  the asymptotically negligible $\mathrm{e}^{-k\pi\xi}$  terms as $\xi\to\infty$,
 we have the  $\Od(\eps^3)$ approximations:
\begin{equation}\label{eqn:outinlim}
\bigl(\tu_\eps^{\ro}(x,\xi)\bigr)^{\ri,0}\approx\eps^2\bigl(-\tilde{F}_2(x,0)+\tilde{F}_3(1,0)\bigr),
\end{equation}
and
\begin{equation}\label{eqn:inoutlim}
\bigl(\tu_\eps^{\ri,0}(x,\xi)\bigr)^\ro\approx\sum_{k=1}^\infty \eps^2\frac{\tilde{f}_k(0)}{k^2\pi^2}
\cos(k\pi x).
\end{equation}
To show that matching (to this order) has been accomplished, we only need to check
that the right hand sides of \eqref{eqn:inoutlim} and  \eqref{eqn:outinlim} are equal.
In fact, we have for every $y$,  by  \eqref{eqn:tuonBC} and \eqref{2xy},
\[
\int_0^1 (-\tilde{F}_2(x,y)+\tilde{F}_3(1,y)\bigr)\,\rd x=0;
\]
from \eqref{Fnxy} and the integration by parts twice,  for $k\geq 1$,
\[
\begin{split}
&2\int_0^1\bigl(-\tilde{F}_2(x,y)+\tilde{F}_3(1,y)\bigr)\cos(k\pi x)\,\rd x
\\
=&\frac{2}{k\pi}\int_0^1\tilde{F}_1(x,y)\sin(k\pi x)\,\rd x\\
=&\frac{2}{k^2\pi^2}\int_0^1\tilde{f}(x,y)\cos(k\pi x)\,\rd x\\
=&\frac{\tilde{f}_k(y)}{k^2\pi^2}.
\end{split}
\]
Note that in the second equality, we also used the simple fact of the integral condition
\[
\tilde{F}_1(x,y)=\int_0^x \tilde{f}(z,y)\,\rd z=0, \quad \text{for }x=0,1.
\]

Analogously, we can solve
\[
\tu_1^{\ri,1}(x,\eta)\equiv0,
\]
\[
\tu_2^{\ri,1}(x,\eta)=\frac{\tilde{f}_k(1)}{k^2\pi^2}(1-\mathrm{e}^{-k\pi\eta}),
\]
and from
\[
\bigl(\tu_\eps^{\ro}(x,\xi)\bigr)^{\ri,1}\approx\eps^2\bigl(-\tilde{F}_2(x,1)+\tilde{F}_3(1,1)\bigr),
\]
\[
\bigl(\tu_\eps^{\ri,1}(x,\xi)\bigr)^\ro\approx\sum_{k=1}^\infty \eps^2\frac{\tilde{f}_k(1)}{k^2\pi^2}
\cos(k\pi x),
\]
we also see that matching (to this order) has been accomplished.

The last step is to combine the three expansions into a composite expansion
\[
\begin{split}
&u_\eps(x,y)
\sim \bar{u}(y)+\tu^{\ro}_0(x,y)+\eps\tu^{\ro}_1(x,y)+\eps^2\tu^{\ro}_2(x,y)\\
&\qquad+\tu^{\ri,0}_0(x,y/\eps)+\eps\tu^{\ri,0}_1(x,y/\eps)+\eps^2\tu^{\ri,0}_2(x,y/\eps)\\
&\qquad+\tu^{\ri,1}_0(x,(1-y)/\eps)+\eps\tu^{\ri,1}_1(x,(1-y)/\eps)+\eps^2\tu^{\ri,1}_2(x,(1-y)/\eps)\\
&\qquad-\bigl(\tu_\eps^{\ro}(x,\xi)\bigr)^{\ri,0}-\bigl(\tu_\eps^{\ro}(x,\xi)\bigr)^{\ri,1}\\
=&u^{[0]}(x,y)+\eps^2\sum_{k=1}^\infty \Bigl[\frac{\tilde{f}_k(y)}{k^2\pi^2}-\frac{\tilde{f}_k(0)}{k^2\pi^2}\mathrm{e}^{-k\pi y/\eps}-
\frac{\tilde{f}_k(1)}{k^2\pi^2}\mathrm{e}^{-k\pi(1-y)/\eps}\Bigr]\cos(k\pi x)\\
=:&u^{[2]}(x,y),
\end{split}
\]
where $u^{[0]}(x,y)$ is the leading order given in  \eqref{eqn:0th order approx}.

Clearly, using the above method, we may proceed to derive the asymptotic expansions of $u_\eps$
to any order, and in general, the form of the asymptotic expansion up to $\Od(\eps^{2n})$, $n=0,1,\dots$, is
\begin{equation}\label{eqn:2nth order approx}.
\begin{split}
u_\eps(x,y)
\sim & u^{[0]}(x,y)
+\sum_{m=1}^n\eps^{2m}\sum_{k=1}^\infty \biggl[\frac{\tilde{f}^{(2m-2)}_k(y)}{(k\pi)^{2m}}-\frac{\tilde{f}^{(2m-2)}_k(0)}{(k\pi)^{2m}}\mathrm{e}^{-k\pi y/\eps}\\
&-\frac{\tilde{f}^{(2m-2)}_k(1)}{(k\pi)^{2m}}\mathrm{e}^{-k\pi(1-y)/\eps}\biggr]\cos(k\pi x)\\
=:&u^{[2n]}(x,y).
\end{split}
\end{equation}
The justification of the approximation  orders of these asymptotic expansions will be given in the next section.

\subsection{Discussion}
For our   approximations \eqref{eqn:2nth order approx},
it is observed that the boundary layer terms
 would disappear if
\[
\tilde{\phi}_{0,k}=\tilde{\phi}_{1,k}=\tilde{f}^{(2m-2)}_k(0)=\tilde{f}^{(2m-2)}_k(1)=0, \quad \text{for all } 1\leq m\leq  n, \ k=1, 2, \cdots,
\]
i.e.,
\[
\phi_0(x),~ \phi_1(x), ~\partial_{y}^{2m}f(x,0), ~\partial_y^{2m}f(x,1), \quad  0\leq m\leq n-1,
\]
all happen to be constant functions independent of $x$.
Note that this condition is stronger than the homogeneous Dirichlet boundary conditions
since it also involves the even order normal derivatives up to $2n-2$ of the external force on the Dirichlet boundaries.
In this case free of the boundary layers, we only need to compute the outer expansions by solving the equations \eqref{eqn:tuo2PDE} and \eqref{eqn:tuonPDE} up to $2n$
to get the approximation $u^{[2n]}$.
Note that each equation in \eqref{eqn:tuo2PDE} and \eqref{eqn:tuonPDE} is actually
a system parametrized by the $y$ variable of independent   ordinary differential equations
in the $x$ variable  rather than a two-dimensional partial differential equation.
In computation, each equation can be solved by a numerical integrator in
parallel at all grid points of the $y$ variable which is now viewed as a parameter.
This can reduce  the  computational cost to  linear
scaling.
However, in general, the appearance of the boundary layers seems inevitable;
then many existing algorithms may need further improvements to approximate the solution
on the whole domain.

\section{Theoretical Justification}
\label{sec:proof}

Define the remainders in the asymptotic expansions of $u_\eps$:
\[
r_{2n}=u_\eps-u^{[2n]}, \quad n=0,1,\dots,
\]
where the approximations $u^{[2n]}$'s are given in \eqref{eqn:2nth order approx}.
We shall prove  the following estimates of the errors $r_{2n}$ in this section.
\begin{theorem}\label{thm:error estimate}
\[
\|r_{2n}\|_{\infty}=\Od(\eps^{2(n+1)}).
\]
\end{theorem}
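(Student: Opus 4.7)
The plan is to reformulate $r_{2n}$ as the unique solution of a linear anisotropic elliptic BVP driven by explicit residuals of the composite expansion, and then to exploit the mean-zero-in-$x$ structure of these residuals to extract a factor $\eps^2$ from the spectral gap of $L_\eps$ on that subspace. By linearity, $r_{2n}$ satisfies
\begin{equation*}
\begin{cases}
-\eps^{-2}\partial_x^2 r_{2n} - \partial_y^2 r_{2n} = G_{2n}, & \text{in } \dom,\\
\partial_x r_{2n}(0,y) = \partial_x r_{2n}(1,y) = 0, & 0\leq y\leq 1,\\
r_{2n}(x,0) = H_0(x),\quad r_{2n}(x,1) = H_1(x), & 0\leq x\leq 1,
\end{cases}
\end{equation*}
with $G_{2n} := f - L_\eps u^{[2n]}$ and $H_j(x) := \phi_j(x) - u^{[2n]}(x,j)$ for $j = 0,1$. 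The Neumann conditions hold exactly because every ingredient of $u^{[2n]}$ is a cosine series in $x$; integrating the defining relation in $x$ and using \eqref{UB} shows that $r_{2n}$, $G_{2n}$, $H_0$ and $H_1$ all have mean zero in $x$, the mean being captured by $\bu$.

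Expanding $r_{2n}(x,y) = \sum_{k \geq 1} r_{2n,k}(y)\cos(k\pi x)$ and similarly for the data, mode $k$ reduces to the one-dimensional BVP
\[
-r_{2n,k}''(y) + \frac{k^2\pi^2}{\eps^2}\,r_{2n,k}(y) = G_{2n,k}(y),\quad r_{2n,k}(0)=H_{0,k},\ r_{2n,k}(1)=H_{1,k}.
\]
Because the characteristic length $\eps/(k\pi)$ of this reduced equation is small, a standard Green's-function (or cosh-barrier) argument gives
$\|r_{2n,k}\|_\infty \leq \frac{\eps^2}{k^2\pi^2}\|G_{2n,k}\|_\infty + \max(|H_{0,k}|,|H_{1,k}|)$.
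Using the trivial Fourier-coefficient bound $\|G_{2n,k}\|_\infty \leq 2\|G_{2n}\|_\infty$ and summing in $k$,
\[
\|r_{2n}\|_\infty \leq \sum_{k\geq 1}\|r_{2n,k}\|_\infty \leq \tfrac{\eps^2}{3}\|G_{2n}\|_\infty + \sum_{k\geq 1}\max(|H_{0,k}|,|H_{1,k}|).
\]

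It then suffices to show $\|G_{2n}\|_\infty = \Od(\eps^{2n})$ and $\sum_k \max(|H_{0,k}|,|H_{1,k}|) = \Od(\mathrm{e}^{-\pi/\eps})$. The boundary estimate is immediate from the construction: $u^{[2n]}(\cdot,0)$ matches $\phi_0$ except for the trace at $y = 0$ of the inner expansion centered at the opposite edge $y = 1$, whose Fourier coefficients carry the geometric factor $\mathrm{e}^{-k\pi/\eps}$, so $\sum_k|H_{0,k}| \leq \mathrm{e}^{-\pi/\eps}\sum_k|\tilde\phi_{1,k}| = \Od(\mathrm{e}^{-\pi/\eps})$, and similarly for $H_1$. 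For the source, applying $L_\eps$ to the composite
\[
u^{[2n]} = \bu + \tu^{\ro,[2n]} + \tu^{\ri,0,[2n]}(\cdot,y/\eps) + \tu^{\ri,1,[2n]}(\cdot,(1-y)/\eps) - M_0 - M_1
\]
and invoking the outer equations \eqref{eqn:tuo2PDE}--\eqref{eqn:tuonPDE}, the inner equations \eqref{eqn:tui0nPDE}, and the matching identities that determine $M_0, M_1$, the intermediate orders telescope in the mean-zero subspace, leaving only $-\eps^{2n}\partial_y^2\tu^{\ro}_{2n}$ together with Taylor-remainder contributions of $\tilde f$ around $y = 0, 1$, each uniformly of size $\Od(\eps^{2n})$. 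Combining the three estimates gives $\|r_{2n}\|_\infty = \Od(\eps^{2(n+1)})$, which is the theorem.

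The main obstacle will be the bookkeeping behind $\|G_{2n}\|_\infty = \Od(\eps^{2n})$: although each individual ingredient of $u^{[2n]}$ contributes $\Od(1)$ terms under $L_\eps$, the composite is tuned so that after invoking the matching identities, all the powers $\eps^0, \eps^2, \ldots, \eps^{2n-2}$ cancel in the mean-zero subspace and only the highest-order $\Od(\eps^{2n})$ residual persists. Verifying this cancellation is essentially algebraic and reuses the same integration-by-parts identities that appeared in the matching step of Section~\ref{sec:exp} (for example, the identity relating $\tilde F_2, \tilde F_3$ to the Fourier coefficients $\tilde f_k$), now applied at every even order up to $2n$.
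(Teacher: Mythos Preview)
Your proposal is correct, but it takes a different route from the paper. You decompose $r_{2n}$ into Fourier cosine modes in $x$, reduce each mode to a one-dimensional Dirichlet problem $-r_k''+(k\pi/\eps)^2 r_k=G_{2n,k}$, invoke the spectral gap $(k\pi/\eps)^2\geq(\pi/\eps)^2$ (valid precisely because the mean-zero structure kills the $k=0$ mode), and then sum over $k$. The paper instead multiplies the equation by $\eps^2$, works with $\Lo=-\partial_x^2-\eps^2\partial_y^2$, and proves a barrier estimate (Lemma~\ref{lem:estimate}) via the maximum principle: the barrier $v(x)=\Phi+\tfrac{G}{2}(1-x)^2$ lives entirely in the $x$-direction, so the bound $\|u\|_\infty\leq\Phi+G/2$ is uniform in~$\eps$, and the extra $\eps^2$ appears simply because $\Lo r_{2n}=\eps^2 G_{2n}$. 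Both arguments ultimately rest on the same residual calculation $G_{2n}=\Od(\eps^{2n})$ and the same $\Od(\mathrm{e}^{-\pi/\eps})$ boundary mismatch, which neither fully writes out (the paper says ``direct calculation''; you call it ``bookkeeping''). The paper's route is shorter and does not require Fourier summability of the data; your route is more explicit about where the $\eps^2$ gain comes from and gives slightly sharper mode-wise information, at the cost of needing $\sum_k|\tilde\phi_{j,k}|<\infty$ and the convergence of the mode sums.
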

To justify these estimates, we first establish a modified version of the maximum principle for the elliptic equation with mixed boundary
value conditions.
\begin{lemma}\label{lem:maximum principle}
Let
\[
\Lo=\sum_{i,j}a_{ij}(\vec x)\partial_{x_ix_j}^2+\sum_{i}b_i(\vec x)\partial_{x_i}
\]
be a uniformly elliptic operator on
 a connected, bounded, open domain $\Omega$.  Suppose that $w\in C^2(\Omega)\cap C(\bar{\Omega})$
satisfies $\Lo w\leqslant 0$ in $\Omega$ and $\partial_{\bn}w\leqslant 0$ on $\Gamma_N\subset\partial\Omega$,
where $\bn$ is the outer unit normal to $\Omega$ on the boundary $\partial\Omega$.
Also assume that $\Omega$ satisfies the interior ball condition at every $\vec x\in\Gamma_N$.
Let $\Gamma_D=\partial\Omega\setminus\Gamma_N$. Then
\[
\max_{\bar{\Omega}}w=\sup_{\Gamma_D}w.
\]
\end{lemma}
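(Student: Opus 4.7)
The plan is to prove the lemma by combining the classical weak and strong maximum principles with Hopf's boundary-point lemma, via a proof by contradiction. The heuristic is simple: if the maximum of $w$ on $\bar{\Omega}$ were attained only on the Neumann portion $\Gamma_N$, then Hopf's lemma would force the outer normal derivative to be strictly positive there, contradicting the hypothesis $\partial_{\bn}w\leq 0$.

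First I would invoke the weak maximum principle for the uniformly elliptic operator $\Lo$ (which has no zeroth-order term) to conclude that $\max_{\bar{\Omega}} w$ is attained at some boundary point $\vec x^{*}\in\partial\Omega$. If $\vec x^{*}\in\Gamma_D$, the identity $\max_{\bar\Omega}w=\sup_{\Gamma_D}w$ follows immediately. Otherwise $\vec x^{*}\in\Gamma_N$ and I argue by contradiction, assuming $w(\vec x^{*})>\sup_{\Gamma_D}w$; in particular $w$ is not identically constant. Next I would appeal to the strong maximum principle (Gilbarg--Trudinger, Chapter 3): because $w$ attains its maximum on the boundary but is not constant, $w(\vec x)<w(\vec x^{*})$ for every $\vec x\in\Omega$. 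The interior ball condition at $\vec x^{*}$, the uniform ellipticity of $\Lo$, and this strict interior inequality are exactly the hypotheses of Hopf's boundary-point lemma, which then yields $\partial_{\bn}w(\vec x^{*})>0$. This contradicts $\partial_{\bn}w\leq 0$ on $\Gamma_N$, so $\vec x^{*}$ must lie in $\Gamma_D$. The degenerate case $w\equiv\mathrm{const}$ gives the conclusion trivially (provided $\Gamma_D\neq\emptyset$; otherwise the statement is vacuous under the convention $\sup_\emptyset=-\infty$).

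The main technical point is verifying the precise hypotheses of Hopf's lemma, in particular the strict interior inequality $w<w(\vec x^{*})$ in a neighborhood of $\vec x^{*}$, which is exactly what the strong maximum principle provides in the non-constant case. Once this dichotomy (constant vs.\ strict interior inequality) is in hand, the remainder is a direct application of two classical tools and no novel estimates are required. With the lemma in place, the next step of the paper will be to construct explicit two-sided barrier functions with controlled Neumann data in order to convert Theorem \ref{thm:error estimate} into a residual-driven $L^\infty$ bound on $r_{2n}$.
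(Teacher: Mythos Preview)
Your proposal is correct and follows essentially the same approach as the paper's own proof: dispose of the constant case trivially, then use the strong maximum principle to rule out an interior maximum and Hopf's lemma (enabled by the interior ball condition) to rule out a maximum on $\Gamma_N$, forcing the maximum onto $\Gamma_D$. The paper's argument is organized slightly more directly---it skips the preliminary appeal to the weak maximum principle and simply argues that no interior point and no point of $\Gamma_N$ can be a maximizer---but the substance is identical, and your forward-looking remark about barrier functions is exactly how the paper proceeds in the subsequent lemma.
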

\begin{proof}
It suffices to show that $w$ attains its maximum over $\bar{\Omega}$ on $\Gamma_D$.
Let us assume $w$ is not constant within $\Omega$, as otherwise the proof is trivial.
Then the strong maximum principle \cite{Evans:1998} states that
$w$ cannot attain its maximum  over $\bar{\Omega}$ at any interior point.
Furthermore,  $w$ cannot attain its maximum  over $\bar{\Omega}$ at any boundary point $\vec x_0\in\Gamma_N$ either,
since otherwise Hopf's lemma \cite{Evans:1998} would imply $\partial_{\bn}w(\vec x_0)>0$, which contradicts the assumption $\partial_{\bn}w\leqslant 0$ on $\Gamma_N$.
Hence  $w$ has to attain its maximum over $\bar{\Omega}$ on $\Gamma_D$.
\end{proof}
The proof of Theorem \ref{thm:error estimate} relies on the following lemma, which is a consequence of  the above modified version of the maximum principle.
\begin{lemma}\label{lem:estimate}
For any $\eps>0$,
let $\Lo=-\partial_x^2-\eps^2\partial_y^2$.
Suppose that $u\in C^2(\dom)\cap C(\bar{\dom})$ satisfies
\[
\begin{cases}
 \Lo u =g & \quad \text{in } \dom ,\\
 \partial_x u(0,y)=\partial_x u(1,y)=0,  \quad &0\leq y\leq 1,\\
  u(x,0)=\phi_0(x),   \quad  u(x,1)=\phi_1(x),  &0\leq x\leq 1.
\end{cases}
\]
Then
\[
\|u\|_{\infty}\leqslant \Phi+\frac G2,
\]
where
\[
G=\sup_{(x,y)\in \dom} \lvert g(x,y)\rvert, \quad \Phi=\sup_{{0\leqslant x\leqslant 1}\atop{i=0,1}} \lvert \phi_i(x)\rvert.
\]
\end{lemma}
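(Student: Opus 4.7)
The plan is to prove the estimate by a barrier-function comparison combined with the modified maximum principle of Lemma \ref{lem:maximum principle}. The geometric observation guiding the choice of barrier is that the Dirichlet boundary $\Gamma_D := \{y = 0\}\cup\{y = 1\}$ is separated from the Neumann boundary $\Gamma_N := \{x = 0\}\cup\{x = 1\}$, so it is natural to look for a barrier that depends only on $y$. Then $\partial_x v \equiv 0$ automatically fulfils the Neumann-compatibility constraint $\partial_{\bn} v = 0$ on $\Gamma_N$, and the only boundary condition left to verify is the Dirichlet comparison with $\phi_0$ and $\phi_1$ on $\Gamma_D$.

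The natural candidate is the quadratic barrier
\[
v(y) := \Phi + \tfrac{G}{2}\,y(1-y),
\]
which has $v(0) = v(1) = \Phi \geq |\phi_i(x)|$ pointwise on $\Gamma_D$, $\partial_x v \equiv 0$, and $-v''(y) = G$, so that $\Lo v \geq |g|$ pointwise in $\dom$. A direct computation gives $\max_{[0,1]} v = \Phi + G/8 \leq \Phi + G/2$, precisely the upper bound claimed by the lemma, with some slack.

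I would then form the pair of comparison functions $w_\pm := \pm u - v$ and check
\[
\Lo w_\pm = \pm g - \Lo v \leq |g| - G \leq 0 \ \text{in}\ \dom,
\qquad w_\pm \leq 0 \ \text{on}\ \Gamma_D,
\qquad \partial_{\bn} w_\pm = 0 \ \text{on}\ \Gamma_N,
\]
using the Neumann condition on $u$ and the $y$-only structure of $v$. Lemma \ref{lem:maximum principle} then applies to each $w_\pm$ and yields $\max_{\bar\dom} w_\pm = \sup_{\Gamma_D} w_\pm \leq 0$, i.e., $\pm u \leq v$ throughout $\bar\dom$. Combining the two inequalities gives $|u(x,y)| \leq v(y) \leq \Phi + G/2$ for every $(x,y) \in \bar\dom$, as required.

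The main subtlety, rather than a genuine technical difficulty, is keeping the signs straight when invoking Lemma \ref{lem:maximum principle}: that lemma is phrased for a uniformly elliptic $\Lo$ in the positive-elliptic ``Laplace form'' acting on a sub-solution ($\Lo w \leq 0$), while the operator in the present lemma carries an overall minus sign and thus plays the role of a $-\Delta$-type operator on a sub-solution $w_\pm$; once this convention is consistently tracked, the maximum principle (as opposed to a minimum principle) is what one recovers. A second, altogether minor, technical point is the interior-ball condition at points of $\Gamma_N$ required by Lemma \ref{lem:maximum principle}, which is immediate here since $\Gamma_N$ consists of two flat segments.
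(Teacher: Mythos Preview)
Your barrier is chosen in the wrong variable, and this is not a cosmetic issue but a genuine gap. With $v(y)=\Phi+\tfrac{G}{2}\,y(1-y)$ you correctly have $-v''(y)=G$, but the operator is $\Lo=-\partial_x^2-\eps^2\partial_y^2$, so
\[
\Lo v \;=\; -\eps^2 v''(y) \;=\; \eps^2 G,
\]
not $G$. Hence $\Lo w_\pm=\pm g-\eps^2 G$, which for small $\eps$ has no definite sign (take any point where $|g|>\eps^2 G$), and Lemma~\ref{lem:maximum principle} does not apply. If you rescale the barrier to force $\Lo v=G$, you must take $v(y)=\Phi+\tfrac{G}{2\eps^2}\,y(1-y)$, and then $\max v=\Phi+\tfrac{G}{8\eps^2}$, which blows up as $\eps\downarrow 0$ and defeats the whole purpose of the lemma (an $\eps$-uniform estimate, needed in Theorem~\ref{thm:error estimate}).

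The paper's proof avoids this by building the barrier in $x$, where the diffusion coefficient is $1$: with $v(x)=\Phi+\tfrac{G}{2}(1-x)^2$ one gets $\Lo v$ of size $G$ independently of $\eps$, and $\max v=\Phi+\tfrac{G}{2}$. The price is that $\partial_x v$ no longer vanishes, so one must check the sign of $\partial_{\bn}(u-v)$ on $\Gamma_N$ by hand; this is where the specific choice $(1-x)^2$ (rather than, say, $x(1-x)$) matters, since it makes $\partial_x v\leq 0$ on $[0,1]$ and hence $\partial_{\bn}w\leq 0$ on both Neumann sides. Your ``geometric observation'' that a $y$-only barrier trivialises the Neumann check is correct but exactly backwards for this anisotropic operator: the barrier has to live in the strongly diffusive direction.
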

\begin{proof}
Let $w(x,y)=u(x,y)-v(x)$, where
\[
v(x)=\Phi+\frac{G}{2}(1-x)^2.
\]
Then it is easy to check that

\begin{align*}
& \Lo w =g-G\leqslant 0  \quad \text{in } \dom ,\\
& \partial_x w(x,y)=\partial_x u(x,y)+G(1-x)=\begin{cases} G\geqslant 0, & x=0,\\
0, & x=1,
\end{cases}\\
&  w(x,0)\leqslant \phi_0(x)-\Phi\leqslant 0,   \quad  w(x,1)\leqslant \phi_1(x)-\Phi\leqslant 0.
\end{align*}
From the modified version of the maximum principle (Lemma \ref{lem:maximum principle}), we conclude that $w(x,y)\leqslant 0$ in $\dom$,
thus
\[
u(x,y)\leqslant \Phi+\frac{G}{2}(1-x)^2\leqslant \Phi+\frac{G}{2}, \quad   \text{in } \dom.
\]
Applying the above argument to $-u$ yields
\[
-u(x,y)\leqslant \Phi+\frac{G}{2}, \quad   \text{in } \dom.
\]
Then we obtain the desired inequality.
\end{proof}
Now we give the proof of Theorem \ref{thm:error estimate}.
\begin{proof}[Proof of Theorem \ref{thm:error estimate}]
Direct calculation shows that
$r_{2n}$ satisfies
\[
\begin{cases}
 \Lo r_{2n} =\Od(\eps^{2(n+1)}) & \quad \text{in } \dom ,\\
 \partial_x r_{2n}(0,y)=\partial_x r_{2n}(1,y)=0,  \quad &0\leq y\leq 1,\\
r_{2n}(x,0)=\Od({\re}^{-\pi/\eps}),   \quad  r_{2n}(x,1)=\Od({\re}^{-\pi/\eps}),  &0\leq x\leq 1,
\end{cases}
\]
where $\Lo$ is the same as in Lemma \ref{lem:estimate}.
Then the asserted error estimates follow from Lemma \ref{lem:estimate}.
\end{proof}

\section{Numerical Example}
\label{sec:num}
We choose the source term
\[
f(x,y)=\sin(\pi(x^2+y^2))
\]
and the Dirichlet boundary conditions
\[
\phi_0(x)=\cos(\pi x), \quad \phi_1(x)=16x^2(x-1)^2.
\]
Note that the Dirichlet boundary conditions should be consistent with the Neumann boundary conditions in \eqref{eqn:PDE},
which dictates the compatibility conditions
\[
\phi_0'(0)=\phi_0'(1)=\phi_1'(0)=\phi_1'(1)=0.
\]
Clearly, these compatibility conditions are satisfied in this example.
\begin{center}
\begin{figure}[htbp]
\subfigure[The contour plot  of $u_\eps$]{
\includegraphics[width=0.475\textwidth]{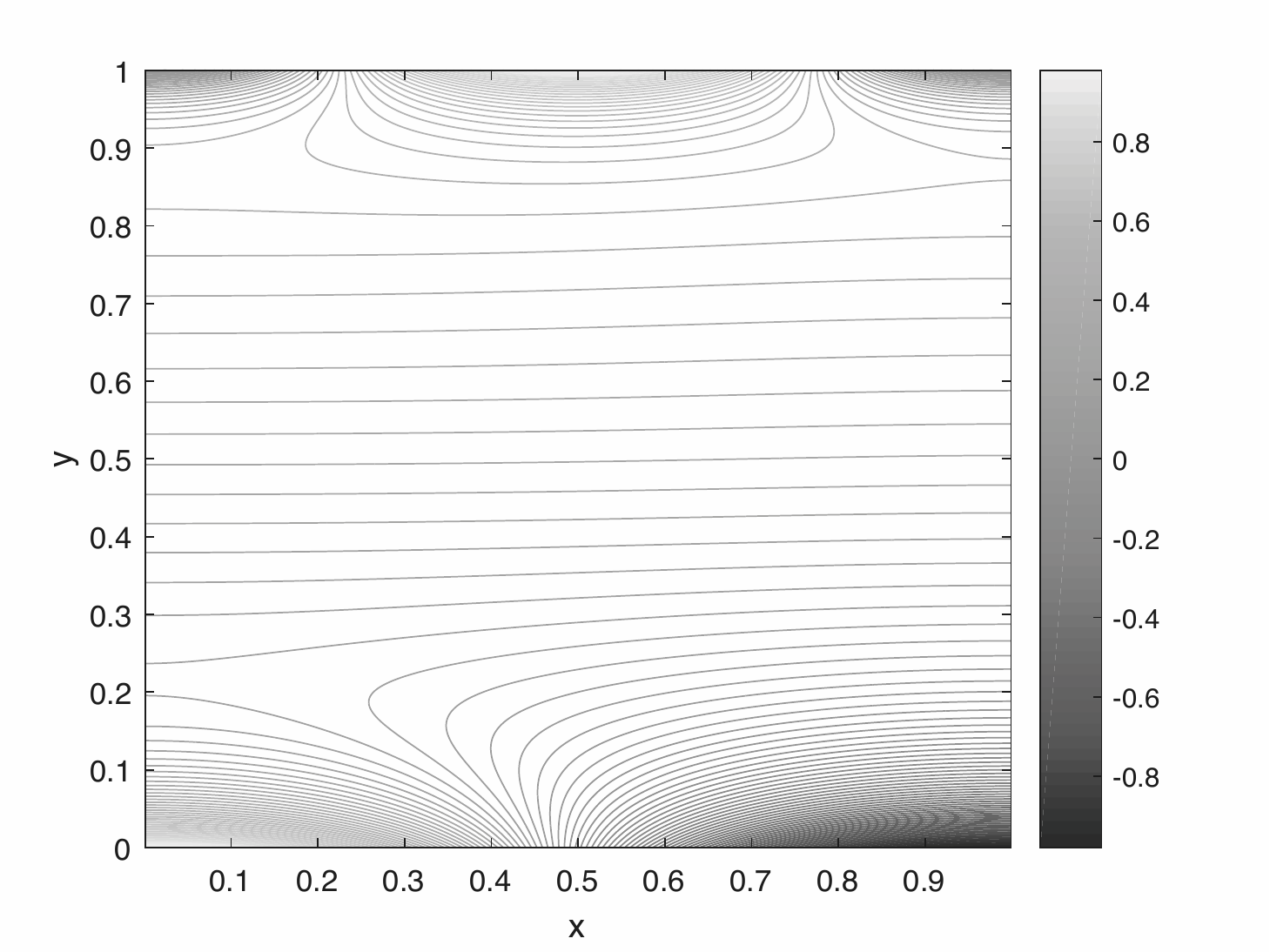}
}
\hfill
\subfigure[The contour plot  of $\bu$]{
\includegraphics[width=0.475\textwidth]{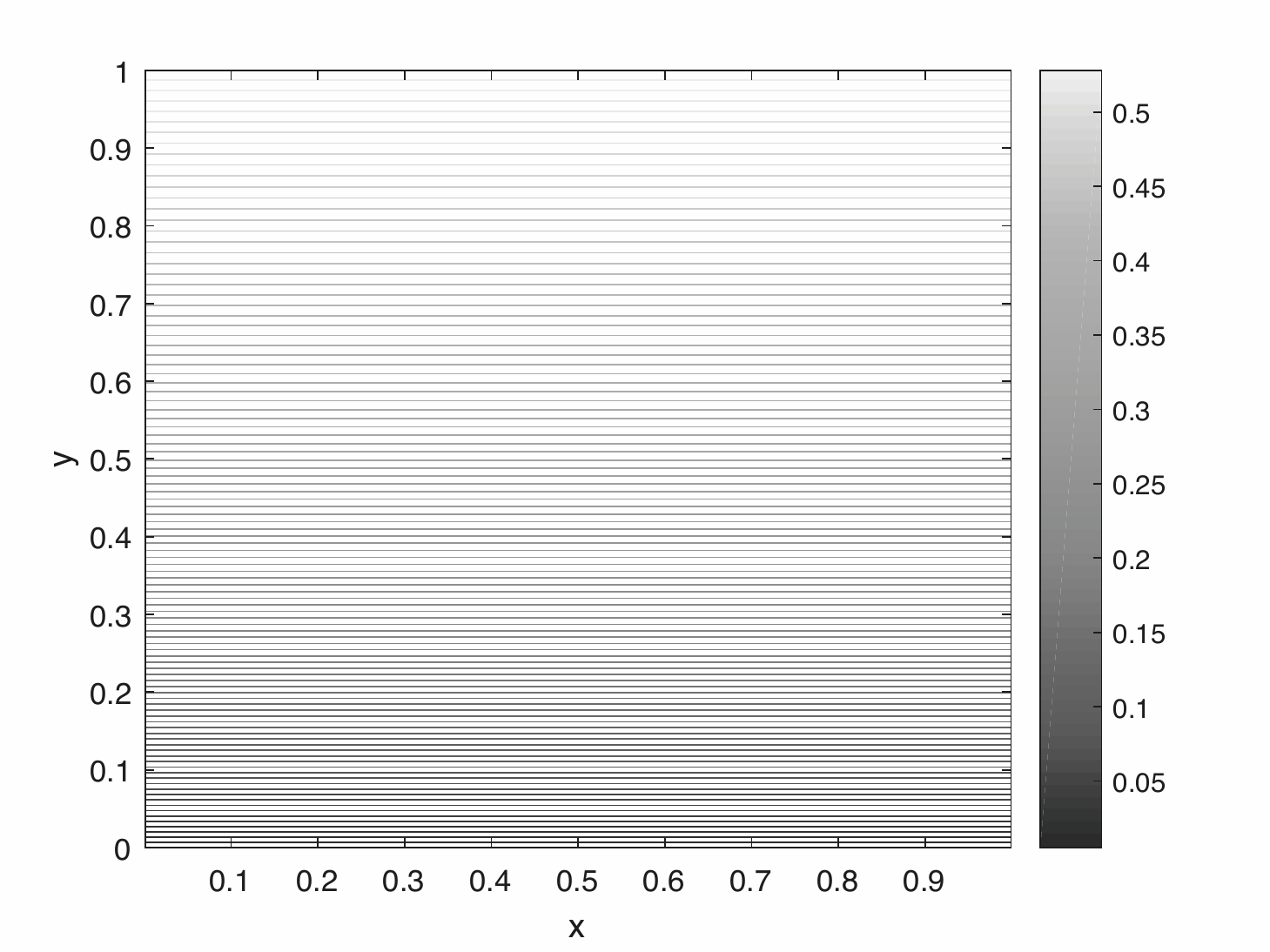}
}
\vfill
\subfigure[The contour plot  of $u^{[0]}$]{
\includegraphics[width=0.475\textwidth]{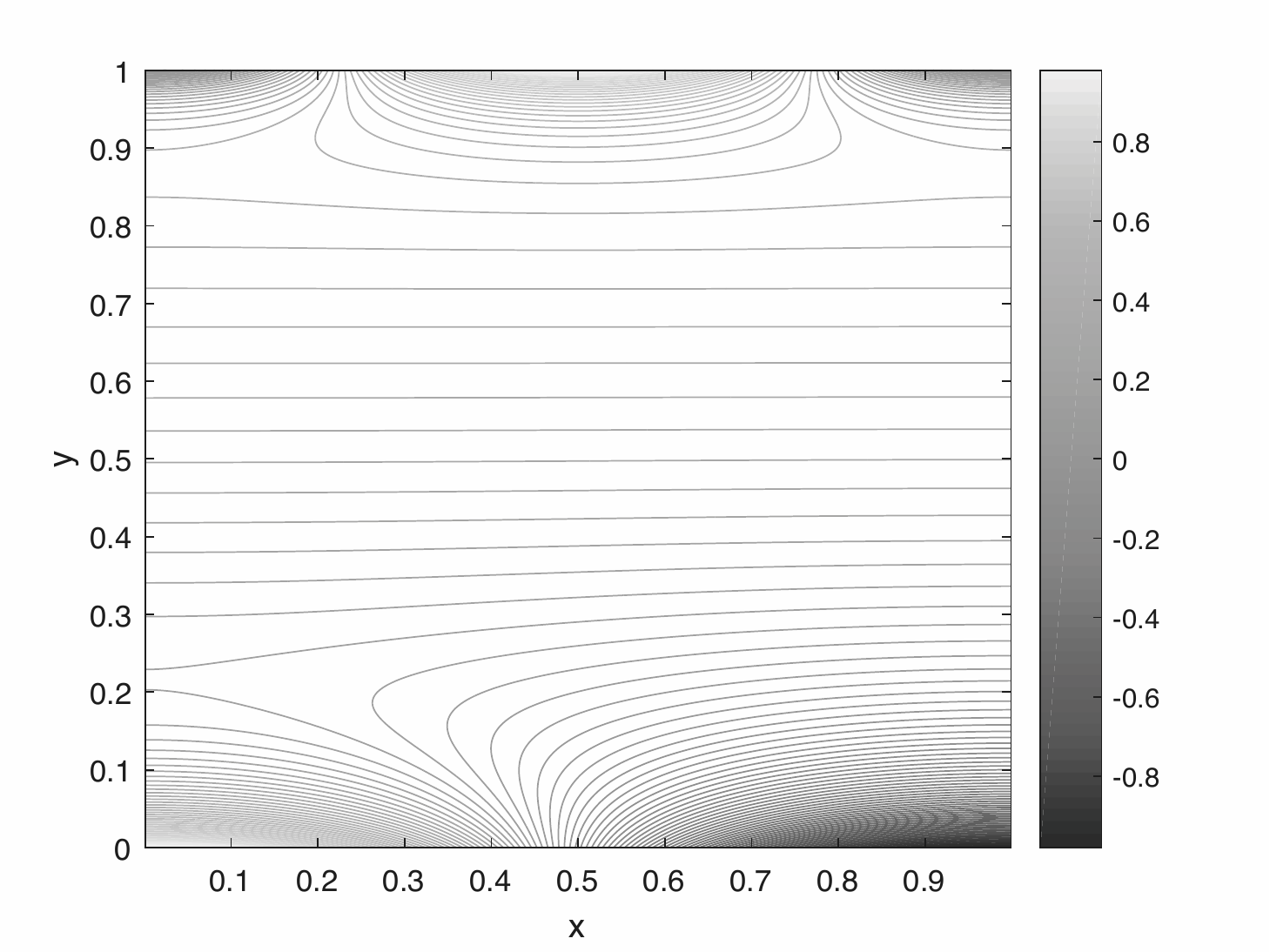}
}\hfill
\subfigure[The contour plot  of $u^{[2]}$]{
\includegraphics[width=0.475\textwidth]{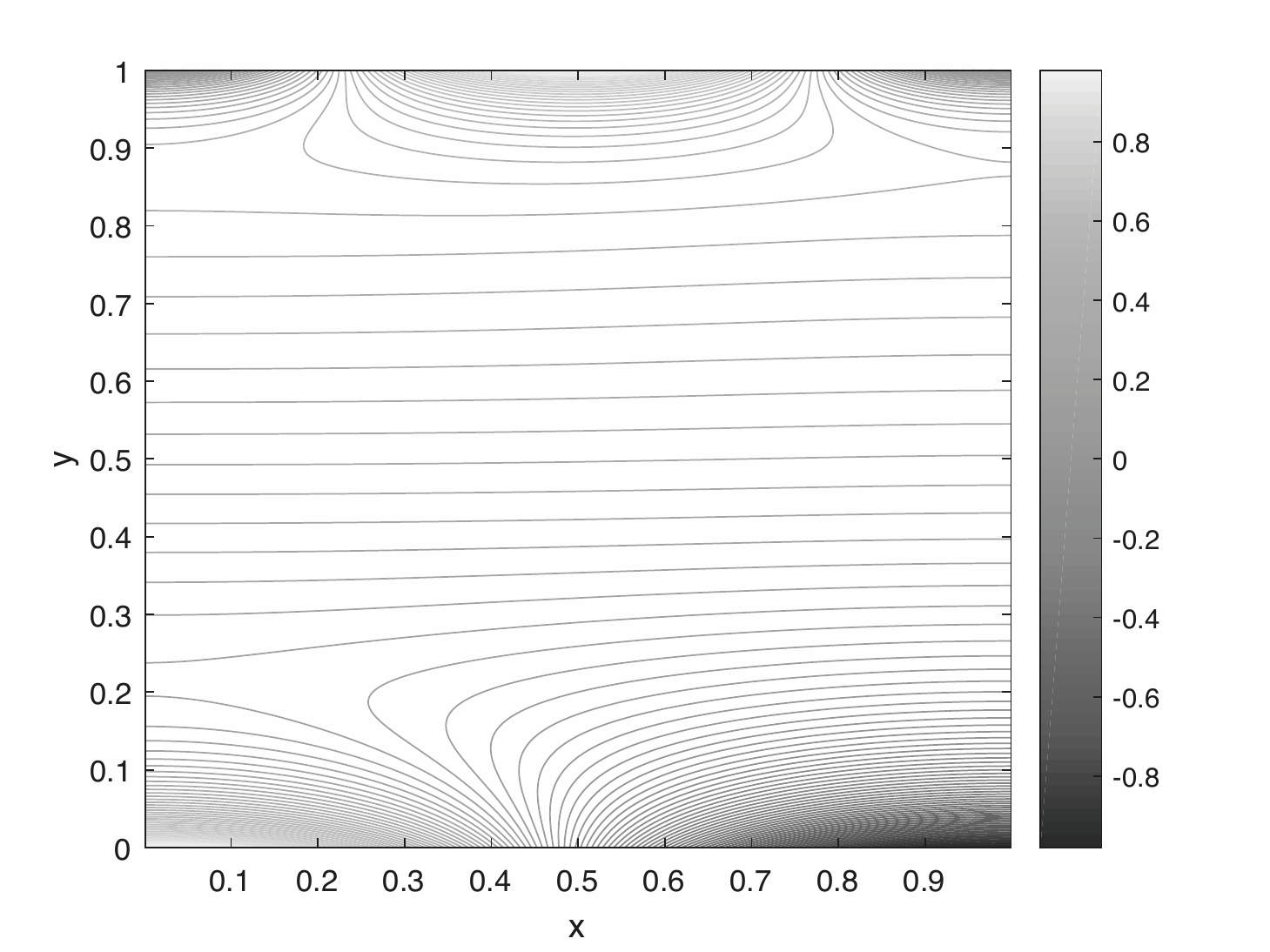}
}
\caption{ The contour plots  of the numerical solutions at $\eps^2=0.05$.
(a) the exact solution $u_\eps$;
(b) the mean part $\bu$; (c) the  asymptotic approximation   $u^{[0]}$ and (d) the asymptotic approximation $u^{[2]}$.
 }\label{fig:ueps}
\end{figure}
\end{center}

We choose a grid for which the meshpoints
are half-integered in the $x$-direction and integered in the $y$-direction, that is,
\[
x_i = (i - \dfrac 12)\Delta x,
\quad y_j = (j - 1)\Delta y,
\]
where $\Delta x =1/N$, $\Delta y=1/M$,
and $i = 1,2,\cdots,N$, $j = 1,2,\cdots, M+1$.
Then the solution $u_\eps$ to \eqref{eqn:PDE} is solved numerically by the standard  five-point  finite difference method.
 Figure \ref{fig:ueps} shows respectively the contour plots  of the numerical solutions
 with a very fine mesh size $M\times N=512\times 2048$  to  the exact solution $u_\eps$, the mean part $\bu$,   the first order asymptotic approximation
 $u^{[0]}$, and the higher order asymptotic approximation $u^{[2]}$
 for $\eps^2=0.05$.
   It is  observed that there exist two boundary layers near the boundaries $y=0$ and $y=1$ respectively,
   and the thickness of each boundary layer is roughly $\eps=\sqrt{0.05}\approx0.22$. This is consistent with the result of the asymptotic analysis
 in last section.
 Clearly, as shown in the subfigure (b), the mean solution $\bu$ fails to capture the leading order solution inside these two boundary layers.
The first order approximation $u^{[0]}$ is very close to the true solution while
 the next order   approximation $u^{[2]}$ is  almost identical to the true solution, as indicated  from the subfigures (c) and (d).
\begin{center}
\begin{table}[htbp]
\begin{tabular}[htbp]{|c|c|c|}
\hline
$\eps^2$   &  $\|r_0\|_{\infty}$ &    $\|r_2\|_{\infty}$ \\\hline
0.001     &    $1.0533E-04$        &   $5.2834E-07$   \\\hline
0.005     &   $5.2222E-04$       &    $7.3587E-06$ \\\hline
0.01      &   $1.0335E-03$      &   $2.8475E-05$     \\\hline
0.05     &    $4.7441E-03$      &    $5.7746E-04$   \\\hline
0.1      &   $8.6241E-03$       &   $1.9240E-03$  \\\hline
\end{tabular}\\[5pt]
\caption{The discrete $L_\infty$ norms of the errors $r_0$ and $r_2$ for different values of $\eps$.}
\label{tab:err}
\end{table}
\end{center}
To further validate the approximation order, we compute the errors in  $L_\infty$ norm.
Table \ref{tab:err} shows the discrete $L_\infty$ norms of the
errors $r_0$ and $r_2$ for different values of $\eps$,
and Figure \ref{fig:err} is the convergence  plot of  these  errors versus $\eps^2$ on logarithmic scales.
The best fitting straight lines through each set of data points are also plotted in Figure \ref{fig:err}.
From the equations of these fitting lines,
we observe that the orders of the approximation errors $r_0$
and $r_2$ are roughly $\Od(\eps^2)$ and $\Od(\eps^4)$ respectively.
These numerical results confirm the theoretical
assertion in Theorem \ref{thm:error estimate}.

\begin{center}
\begin{figure}[htbp]
\includegraphics[width=0.5\textwidth]{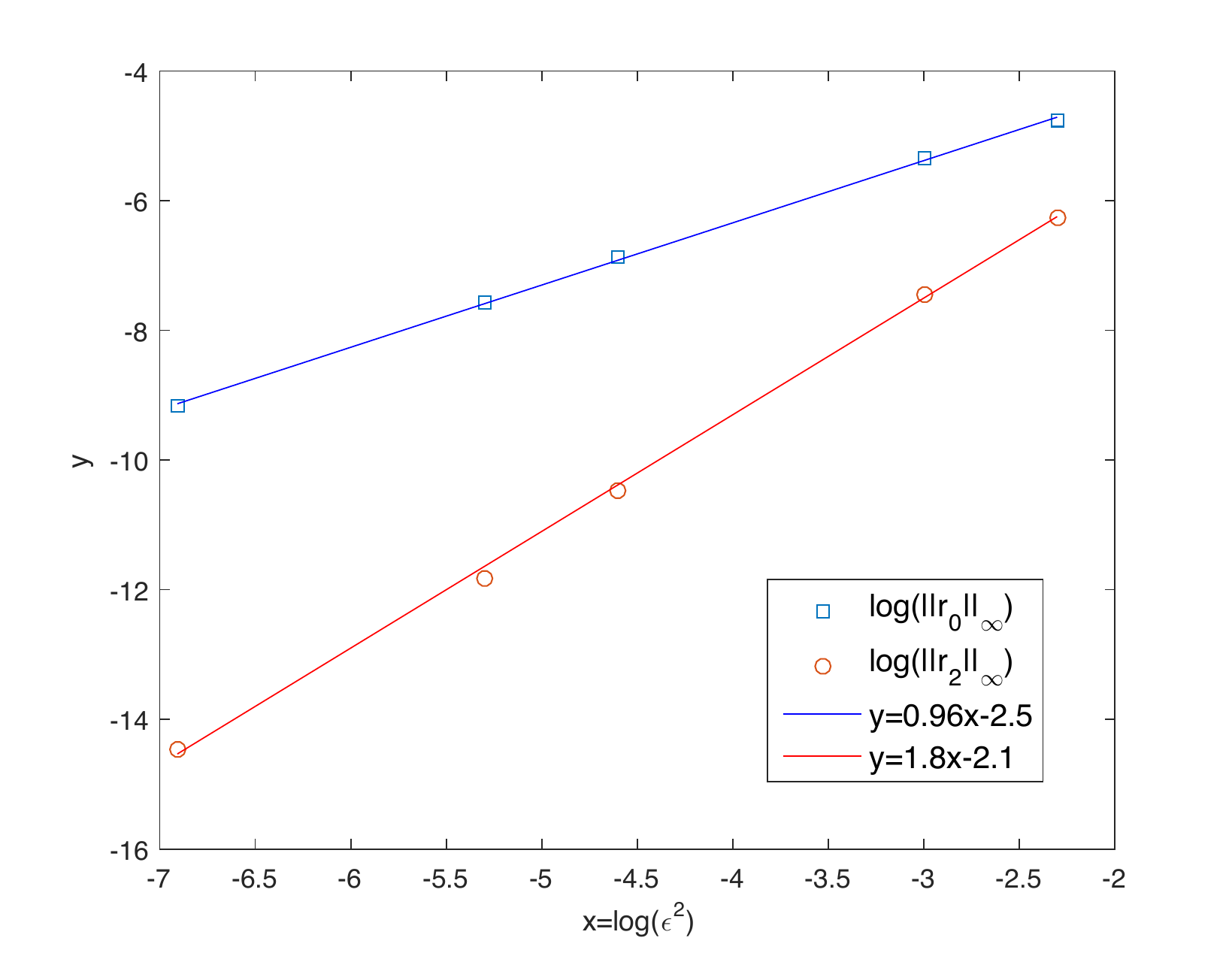}
\caption{The plot on logarithmic scales of the discrete $L_\infty$ norms of the errors $r_0$ (in  the square-shaped markers) and $r_2$ (in  the circle-shaped markers) versus $\eps^2$.
The  best fitting straight lines through each set of data points are also shown.
Their equations are indicated in the legend.
}
\label{fig:err}
\end{figure}
\end{center}

\section{Conclusion}

We have presented a formal expansion of the solution to the strongly anisotropic
diffusion equation \eqref{eqn:PDE} in the simple rectangular domain.
Our expansions take into account the boundary layers near the Dirichlet boundaries.
We proved the rigorous convergence of the formal expansion by the maximum principle.
In theory,  our method  can be generalized to   complicated cases
where the field line is not simply the $x$-axis direction, but in the form of curves.
For such cases, the analysis of the boundary layers is more difficult.
Nevertheless, we expect that our analysis here would  provide some ideas of
constructing new efficient numerical methods which
 incorporate  both the boundary layer effect due to the Dirichlet boundary condition and
 the ill-posedness due to the Neumann or periodic boundary condition in the strongly
 anisotropic diffusion limit.

\bibliographystyle{amsplain}
\bibliography{reference}

\end{document}